\theoremstyle{plain}
\newtheorem{theorem}{Theorem}[section]
\newtheorem{cor}[theorem]{Corollary}
\newtheorem{prop}[theorem]{Proposition}
\theoremstyle{definition}
\newtheorem{defi}[theorem]{Definition}
\theoremstyle{remark}
\newtheorem{rem}[theorem]{Remark}
\numberwithin{equation}{section}
\newcommand{\ai}{\ensuremath{A_\infty}}
\newcommand{\ci}{\ensuremath{C_\infty}}
\newcommand{\cc}[1]{\ensuremath{\mathrm{ch}(#1)}}
\newcommand{\ext}[1]{\ensuremath{\mathrm{Ext}(#1)}}
\newcommand{\ce}[2][\bullet]{\ensuremath{C_{#1}(#2)}}
\newcommand{\hce}[2][\bullet]{\ensuremath{H_{#1}(#2)}}
\newcommand{\mcs}[1]{\ensuremath{\mathcal{MC}(#1)}}
\newcommand{\mcm}[1]{\ensuremath{\widetilde{\mathcal{MC}}(#1)}}
\newcommand{\mspc}{\ensuremath{\mathcal{M}_{g,n}}}
\newcommand{\dmcmp}{\ensuremath{\overline{\mathcal{M}}_{g,n}}}
\newcommand{\kcmp}{\ensuremath{\mathcal{K}\overline{\mathcal{M}}_{g,n}}}
\newcommand{\lcmp}{\ensuremath{\mathcal{L}\left[\dmcmp\times\Delta_{n-1}\right]}}
\newcommand{\cotimes}{\ensuremath{\hat{\otimes}}}
\newcommand{\curv}{\ensuremath{\mathcal{M}_{g,n}}}
\newcommand{\gf}{\ensuremath{\mathbb{Q}}}
\newcommand{\innprod}{\ensuremath{\langle -,- \rangle}}
\newcommand{\noproof}{\begin{flushright} \ensuremath{\square} \end{flushright}}
\DeclareMathOperator{\ad}{ad}
\DeclareMathOperator{\sgn}{sgn}
\begin{document}
\title{Classes on compactifications of the moduli space of curves through solutions to the quantum master equation}
\author{Alastair Hamilton}
\address{University of Connecticut, Mathematics Department, 196 Auditorium Road, Storrs, CT 06269. USA.}
\email{hamilton@math.uconn.edu}
\begin{abstract}
In this paper we describe a construction which produces classes in compactifications of the moduli space of curves. This construction extends a construction of Kontsevich which produces classes in the open moduli space from the initial data of a cyclic $\ai$-algebra. The initial data for our construction is what we call a `quantum $\ai$-algebra', which arises as a type of deformation of a cyclic $\ai$-algebra. The deformation theory for these structures is described explicitly. We construct a family of examples of quantum $\ai$-algebras which extend a family of cyclic $\ai$-algebras, introduced by Kontsevich, which are known to produce all the kappa classes using his construction.
\end{abstract}
\keywords{Moduli space of curves, $\ai$-algebra, deformation theory, noncommutative geometry, Maurer-Cartan set.}
\subjclass[2000]{14D22, 16S80, 17B55, 17B62, 55S35, 57N65.}
%\thanks{}
\maketitle
%\tableofcontents
%
%
%

\section{Introduction} \label{sec_intro}

In this paper we consider an analogue of the construction described by Kontsevich \cite{kontfeynman} which produces classes in moduli spaces of compact Riemann surfaces with unlabelled marked points. The initial data for the original construction was a cyclic $\ai$-algebra. This construction was originally described combinatorially by making use of the orbi-cell decomposition of moduli space provided by Harer \cite{harer}, Mumford, Penner \cite{penner} and Thurston. In the paper \cite{cc} it was shown that this construction could be equivalently reformulated as an instance of a more general algebraic construction of the type we consider in this paper.

In this paper we describe a construction which produces classes in a certain compactification of the moduli space of curves. This compactification was introduced by Kontsevich in his study of Witten's conjectures \cite{kontairy}. The algebraic structures which produce these classes are a type of deformation of the structure of a cyclic $\ai$-algebra, which sometimes go under the heading of `quantum $\ai$-algebras' in the physical literature \cite{herbst}, or `loop homotopy algebras' in the mathematical literature \cite{markl}. Barannikov also has a related paper in which he shows that algebras over the Feynman transform of the associative operad produce classes in a compactification of the moduli space \cite{baran}.

These quantum algebraic structures arise naturally from the description of the homology of this compactification of the moduli space that was provided by the author in \cite{ham}. In this paper it was shown that the homology of this compactification could be identified with the homology of a certain differential graded Lie algebra, which was constructed using the Lie bialgebra structures considered by various authors: \cite{chasul}, \cite{movshev}, \cite{sched}.

This differential graded Lie algebra arises as a deformation of the Lie algebra constructed by Kontsevich in \cite{kontsympgeom}, whose homology is known to recover the homology of the one point compactification of the open moduli space. As such, there is a map from this differential graded Lie algebra onto Kontsevich's given by setting the deformation parameters to zero. Geometrically, it corresponds to shrinking the boundary of this compactification to a point. Given a class on the open moduli space, we can ask whether it is possible to lift this class to the compactified moduli space and whether it is possible to parameterise the different possible liftings in some way. Using the above constructions, it is possible to interpret this as a problem in algebraic deformation theory, which we solve.

The layout of the paper is as follows. In Section \ref{sec_noncomm} we recall the definition of Kontsevich's Lie algebra of noncommutative hamiltonians and construct the differential graded Lie algebra that will be the source of the algebraic structures for our construction. In Section \ref{sec_mc} we recall basic material on Maurer-Cartan moduli spaces and describe a construction which produces classes in the homology of any differential graded Lie algebra. In Section \ref{sec_quantum} we give the definition of a quantum $\ai$-algebra as a solution to the master equation in the differential graded Lie algebra constructed in Section \ref{sec_noncomm}. Section \ref{sec_obsthy} describes the essentially standard obstruction theory that controls the process of extending a cyclic $\ai$-algebra into a quantum $\ai$-algebra. Lastly, in Section \ref{sec_examples} we consider a family of examples of quantum $\ai$-algebras which extend the family of cyclic $\ai$-algebras constructed by Kontsevich in \cite{kontfeynman}, which are known to yield all the kappa classes in the moduli space of curves using the construction in Section \ref{sec_mc}.

\subsection*{Notation and conventions}

Throughout the paper we work over the field $\gf$ of rational numbers, although we could of course work over any field of characteristic zero. By a symplectic vector space we mean a $\mathbb{Z}/2\mathbb{Z}$-graded vector space $V$ with a nondegenerate skew symmetric bilinear form $\innprod$. We restrict ourselves to the situation in which the bilinear form is \emph{odd}, except in Section \ref{sec_examples}, in which we treat some examples. This is not a necessary restriction, but one of convenience. By the parity reversion $\Pi V$ we mean the $\mathbb{Z}/2\mathbb{Z}$ graded vector space
\[\Pi V_0:=V_1\qquad\Pi V_1:=V_0.\]

If a group $G$ acts on a vector space $V$, we will denote its coinvariants by $V_G$. By a (differential) graded Lie algebra we will mean a $\mathbb{Z}/2\mathbb{Z}$-graded vector space $\mathfrak{g}$ such that $\Pi\mathfrak{g}$ has the structure of a (differential) graded Lie algebra; i.e., $\mathfrak{g}$ is equipped with an \emph{odd} Lie bracket. Here we prefer to work with $\mathbb{Z}/2\mathbb{Z}$-graded spaces as opposed to $\mathbb{Z}$-graded spaces, in order to accommodate a broader class of examples, such as those considered in Section \ref{sec_examples}.

\section{Noncommutative geometry} \label{sec_noncomm}

In this section we introduce the relevant algebraic structures that we will need for our construction. We recall the definition of Kontsevich's Lie algebra of noncommutative hamiltonians and how it can be given the structure of a Lie bialgebra. This Lie bialgebra is then used to construct, following \cite{ham}, the differential graded Lie algebra whose Maurer-Cartan elements will be the initial data for our construction. Note that in all our constructions we work formally, i.e. we work with power series rather than polynomials.

\begin{defi}
Let $V$ be a symplectic vector space. We can define a Lie algebra structure $\{-,-\}$ on
\[ \mathfrak{h}[V]:=\prod_{n=0}^\infty [(V^*)^{\otimes n}]_{\mathbb{Z}/n\mathbb{Z}} \]
by the formula
\begin{equation} \label{eqn_bracket}
\{a_1\cdots a_n,b_1\cdots b_m\} = \sum_{i=1}^n\sum_{j=1}^m (-1)^p \langle a_i,b_j \rangle^{-1} (z_{n-1}^{i-1}\cdot [a_1\cdots \hat{a_i} \cdots a_n]) (z_{m-1}^{j-1}\cdot [b_1\cdots \hat{b_j} \cdots b_m]),
\end{equation}
for $a_i,b_j\in V$; where $z_n$ denotes the $n$-cycle $(n\,n-1\ldots 2\,1)$ and $\innprod^{-1}$ denotes the dual inner product on $V^*$.
\end{defi}

In fact, we can give $\mathfrak{h}[V]$ the structure of a Lie bialgebra.

\begin{defi}
There is a Lie cobracket $\Delta:\mathfrak{h}[V]\to\mathfrak{h}[V]\otimes\mathfrak{h}[V]$ defined by the formula
\begin{equation} \label{eqn_cobracket}
\Delta(a_1\cdots a_n) = \frac{1}{2}\sum_{i<j}(-1)^p\langle a_i,a_j \rangle^{-1}[1+(1 \, 2)]\cdot[ (a_{i+1}\cdots a_{j-1}) \otimes (a_{j+1}\cdots a_n a_1\cdots a_{i-1})];
\end{equation}
where
\begin{displaymath}
\begin{split}
p:= & |a_i|(|a_1|+\ldots+|a_i|) + |a_j|(|a_1|+\ldots+|a_j|) \\
& + (|a_1|+\ldots+|a_{i-1}|)(|a_{i+1}|+\ldots+|a_{j-1}|+|a_{j+1}|+\ldots+|a_n|).
\end{split}
\end{displaymath}
\end{defi}

The following proposition can be checked by a series of direct calculations.

\begin{prop}
The bracket $\{-,-\}$ and the cobracket $\Delta$ defined above give $\mathfrak{h}[V]$ the structure of an involutive Lie bialgebra. This means that they satisfy the following compatibility conditions:
\begin{enumerate}
\item $\Delta([x,y])  = [x,\Delta(y)]-(-1)^{xy}[y,\Delta(x)]$,
\item $[-,-]\circ\Delta  = 0.$
\end{enumerate}
\end{prop}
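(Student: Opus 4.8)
The plan is to reduce to the topological basis of $\mathfrak{h}[V]$ consisting of cyclic monomials in the letters of $V^*$ and to verify each identity by a direct comparison of terms. By the preceding definitions we may take for granted that $\{-,-\}$ is a graded Lie bracket and that $\Delta$ is a graded Lie cobracket, so only the compatibilities (1) and (2) are at issue. The key observation is that on a cyclic monomial each of $\{-,-\}$ and $\Delta$ acts by selecting a pair of letters among the inputs, contracting them via $\innprod^{-1}$, and performing a surgery on the underlying cyclic word(s): the bracket contracts one letter from each of two cyclic words and glues the two punctured words into one, while the cobracket contracts two letters of a single cyclic word and splits it into two arcs (the symmetrization $[1+(1\,2)]$ being exactly what makes $\Delta$ graded co-antisymmetric). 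Hence a term of $\Delta([x,y])$, of $[x,\Delta(y)]$, of $[y,\Delta(x)]$, or of $[-,-]\circ\Delta(x)$ is indexed by a choice of two disjoint pairs of letters among the inputs together with a Koszul sign, and both identities become a sign-respecting bijective matching of such indexing data.

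For the cocycle identity (1), write $x$ and $y$ as cyclic monomials. A term of $\Delta([x,y])$ first contracts a letter of $x$ with a letter of $y$, producing a single cyclic word $w$ whose letters split into an ``$x$-block'' and a ``$y$-block'', and then contracts two further letters of $w$; these lie either both in the $y$-block, both in the $x$-block, or one in each. In the first case the surgery is precisely that of $[x,\Delta(y)]$: cutting $y$ along those two letters produces a pure $y$-arc together with a complementary arc, and gluing $x$ to the complementary arc along the original contracted pair reproduces the mixed arc, so these terms match the first summand on the right-hand side; symmetrically, the second case reproduces $-(-1)^{xy}[y,\Delta(x)]$. The ``mixed'' terms, which contract two disjoint cross-pairs $\{x_a,y_b\}$ and $\{x_c,y_d\}$, have no counterpart on the right; here one observes that the term in which $\{x_a,y_b\}$ is used by the bracket and $\{x_c,y_d\}$ by the cobracket produces the very same unordered pair of cyclic words as the term with the two roles exchanged, and the claim is that these two terms carry opposite signs, so the mixed contributions cancel in pairs (the role-swap being a free involution, as the two pairs are disjoint).

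For the involutivity identity (2) one argues in the same spirit: a term of $[-,-]\circ\Delta$ applied to a cyclic monomial $a_1\cdots a_n$ contracts a pair $\{a_i,a_j\}$, splitting the word into arcs $u$ and $v$, and then contracts a letter $a_p\in u$ with a letter $a_q\in v$, producing a single cyclic word. Such a term is thus indexed by four letters whose cyclic order around the original word is necessarily $a_i,a_p,a_j,a_q$, since $a_p$ and $a_q$ are forced into different arcs and hence the chord $\{a_p,a_q\}$ links the chord $\{a_i,a_j\}$. For each such linked quadruple there are exactly two contributing terms, one with $\{a_i,a_j\}$ used by $\Delta$ and $\{a_p,a_q\}$ by $\{-,-\}$, and one with the roles reversed; they produce the same final cyclic word, and the content of (2) is that they come with opposite signs, so the composite vanishes term by term.

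I expect the combinatorics of the surgeries — matching cyclic orders once one has unwound how the rotations $z_k^{\ell}$ in \eqref{eqn_bracket} and the arc decomposition in \eqref{eqn_cobracket} place the gaps — to be routine; the real obstacle is the sign bookkeeping. The Koszul exponents $p$ appearing in \eqref{eqn_bracket} and \eqref{eqn_cobracket} are sums of products of degrees over long, position-dependent ranges, and in each cancellation above one must show that, after all the relevant letters have been transported to their final positions, the two exponents attached to a cancelling pair differ by an odd integer. A workable way to organize this is to record, for each distinguished letter, only the parity of the block of letters separating it from the next distinguished letter; with that data the competing sign exponents can be compared directly. This sign verification, rather than any structural point, is what makes the proposition a series of direct calculations.
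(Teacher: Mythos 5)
Your proposal matches the paper's approach: the paper gives no argument at all, stating only that the proposition ``can be checked by a series of direct calculations,'' and your chord-contraction bookkeeping --- matching the terms where both cobracket letters lie in the same original word with the two summands on the right-hand side of (1), and cancelling the mixed (linked-chord) terms in pairs under the role-swap involution, which is also the mechanism for (2) --- is precisely that direct calculation, correctly organized. The only piece you leave open, the parity check that a swapped pair of contractions carries opposite Koszul signs, is exactly the routine verification the paper itself omits.
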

\noproof

\begin{rem}
The Lie bialgebra $\mathfrak{h}[V]$ has a natural grading determined by the length of a cyclic word. We denote by
\[\mathfrak{h}_{\geq i}[V]:=\prod_{n=i}^\infty [(V^*)^{\otimes n}]_{\mathbb{Z}/n\mathbb{Z}}\]
the subspace of $\mathfrak{h}[V]$ consisting of words of length $\geq i$. Note that $\mathfrak{h}_{\geq 2}[V]$ is a Lie subalgebra of $\mathfrak{h}[V]$. It is this Lie algebra $\mathfrak{h}_{\geq 2}$ which is the subject of Kontsevich's original theorem \cite{kontsympgeom}, which links its (stable) homology to the homology of the one point compactification of the moduli space of curves.
\end{rem}

There is a standard way to construct a differential graded Lie algebra from any involutive Lie bialgebra, which we now recall.

\begin{defi}
Let $\mathfrak{g}$ be a differential graded Lie algebra. The Chevalley-Eilenberg complex of $\mathfrak{g}$, denoted by $\ce{\mathfrak{g}}$, is the complex whose underlying vector space is the completed symmetric algebra on $\mathfrak{g}$:
\[ \ce{\mathfrak{g}}:=\widehat{S}(\mathfrak{g})=\prod_{n=0}^{\infty} (\mathfrak{g}^{\cotimes n})_{S_n}. \]
The differential $\delta:\ce{\mathfrak{g}}\to\ce{\mathfrak{g}}$ is defined by the formula,
\begin{displaymath}
\begin{split}
\delta(g_1\cdots g_n):=&\sum_{1\leq i<j\leq n} (-1)^p\{g_i,g_j\} \, g_1\cdots \hat{g_i} \cdots \hat{g_j} \cdots g_n \\
& + \sum_{1\leq i\leq n} (-1)^q d(g_i) \, g_1\cdots \hat{g_i} \cdots  g_n;
\end{split}
\end{displaymath}
where
\begin{displaymath}
\begin{split}
p:= & |g_i|(|g_1|+\ldots+|g_{i-1}|) + |g_j|(|g_1|+\ldots+|g_{j-1}|) + |g_i||g_j|, \\
q:= & |g_i|(|g_1|+\ldots+|g_{i-1}|)
\end{split}
\end{displaymath}
and $d$ is the differential on $\mathfrak{g}$. The homology of this complex is known as the Chevalley-Eilenberg homology of the differential graded Lie algebra $\mathfrak{g}$ and is denoted by $\hce{\mathfrak{g}}$.
\end{defi}

\begin{rem}
Note that $\delta$ is a coderivation of the canonical coproduct on the symmetric algebra, so that in fact $\hce{\mathfrak{g}}$ has the structure of a coalgebra.
\end{rem}

There is a natural grading on $\widehat{S}(\mathfrak{g})$ determined by counting the order of the tensors. We will denote by
\[\widehat{S}^{\geq i}(\mathfrak{g}):=\prod_{n=i}^{\infty} (\mathfrak{g}^{\cotimes n})_{S_n}\]
the subspace of $\widehat{S}(\mathfrak{g})$ consisting of tensors of order $\geq i$.

If $\mathfrak{g}$ is a Lie bialgebra, then we may define an additional differential $\Delta$ on the Chevalley-Eilenberg complex of $\mathfrak{g}$ by extending the Lie cobracket on $\mathfrak{g}$ using the Leibnitz rule:
\[\Delta(g_1\cdots g_n):=\sum_{i=1}^n (-1)^p\Delta(g_i) \, g_1\cdots \hat{g_i} \cdots g_n,\]
where $p:=|h_i|(|h_1|+\ldots+|h_{i-1}|)$. The condition $\Delta^2=0$ follows from the coJacobi identity. The Lie bracket $[-,-]$ on $\mathfrak{g}$ extends to a Lie bracket on the whole of $\ce{\mathfrak{g}}$ using the Leibniz rule, giving $\ce{\mathfrak{g}}$ the structure of a differential graded Lie algebra with respect to both $\Delta$ and $\delta$.

Consider now the involutive Lie bialgebra $\mathfrak{h}$. We can define a differential graded Lie algebra out of $\mathfrak{h}$ by taking the Chevalley-Eilenberg complex of $\mathfrak{h}$ and equipping it with a deformed differential. The underlying space is
\[ \mathfrak{l}:=\ce{\mathfrak{h}}\cotimes\gf[[\gamma]]\]
and the differential is $d:=\gamma\cdot\delta+\Delta$. Technically, this is not the object we want, so we now describe how it should be modified. First we note that $\mathfrak{h}$ splits as
\[ \mathfrak{h}=\mathfrak{h}_{\geq 1}\times\gf. \]
So, by identifying the completed symmetric algebra on the field $\gf$ with the free power series algebra in one variable $\nu$ we see that
\[ \mathfrak{l}= \widehat{S}(\mathfrak{h}_{\geq 1})\cotimes\gf[[\gamma,\nu]]. \]
Now we notice that the summand corresponding to $\gf[[\gamma,\nu]]$ in $\mathfrak{l}$ is a differential graded ideal of $\mathfrak{l}$ and therefore
\[ \mathfrak{l}':= \widehat{S}^{\geq 1}(\mathfrak{h}_{\geq 1})\cotimes\gf[[\gamma,\nu]] \]
inherits the structure of a differential graded Lie algebra. Finally, we also need to throw away the summand corresponding to $V$. That is, $\mathfrak{l}'$ has a grading where the order of each deformation parameter is 1 and a cyclic word in $\mathfrak{h}$ of length $i$ has order $i$. We throw away the part of order one; that is, we consider the differential graded Lie subalgebra generated by terms of order $\geq 2$. This yields the desired differential graded Lie algebra, which we denote by $\Lambda_{\gamma,\nu}[V]$. There are maps
\begin{equation} \label{eqn_dgmap}
\Lambda_{\gamma,\nu}[V]\to\Lambda_{\gamma}[V]\to\mathfrak{h}_{\geq 2}[V]
\end{equation}
of differential graded Lie algebras, where $\mathfrak{h}_{\geq 2}[V]$ has the trivial differential. Here $\Lambda_{\gamma}[V]$ denotes the subspace of $\widehat{S}^{\geq 1}(\mathfrak{h}_{\geq 1})\cotimes\gf[\gamma]$ generated by terms of order $\geq 2$. It carries the natural structure of a differential graded Lie algebra, which is induced by stipulating that the map $\Lambda_{\gamma,\nu}[V]\to\Lambda_{\gamma}[V]$, determined by setting the deformation parameter $\nu$ to zero, is a map of differential graded Lie algebras. The right-hand map of \eqref{eqn_dgmap} is given by setting the other deformation parameter $\gamma$ to zero and projecting onto the summand
\[\mathfrak{h}_{\geq 2}=\widehat{S}^1(\mathfrak{h}_{\geq 2})\subset\widehat{S}^{\geq 1}(\mathfrak{h}_{\geq 1}).\]

The main result of \cite{ham} is that the homology of the differential graded Lie algebra $\Lambda_{\gamma,\nu}[V]$, after stabilising with respect to the dimension of $V$, recovers precisely the homology of a certain compactification of the decorated moduli space of curves with unlabelled marked points. The maps \eqref{eqn_dgmap} described above correspond geometrically to shrinking parts of the boundary of this compactification to a point.

There are two relevant compactifcations, defined by introducing an equivalence relation on the Deligne-Mumford compactification. In the first, two stable curves are identified as being equivalent if they are homeomorphic through a map which preserves the complex structure on the irreducible components which \emph{contain at least one marked point}. We denote this compactification by $\kcmp$. In the second, we consider \emph{decorated} stable curves, in which each marked point is assigned a non-negative real number, called a \emph{perimeter}, which is allowed to vanish; two decorated stable curves are identified as being equivalent if they are homeomorphic through a map which preserves the complex structure on the irreducible components \emph{containing at least one marked point with positive perimeter.} We denote this compactification by $\lcmp$. See \cite{kontairy}, \cite{looi}, \cite{mondello} and \cite{ham} for details.

Summarising the results of \cite{ham}, we have the following commutative diagram, where $pt$ denotes the one point compactification and the left-hand maps are defined by shrinking the relevant parts of the boundary to a point. On the right-hand side we consider \emph{stable} Chevalley-Eilenberg homology, i.e. we consider the stable limit of the homology groups as the dimension of $V$ tends to infinity.
\begin{equation} \label{eqn_shrink}
\xymatrix{H_{\bullet} \left(\bigsqcup_{\begin{subarray}{c} n\geq 1 \\ g>1-\frac{n}{2} \end{subarray}} \lcmp\right) \ar@{=}[r]  \ar[d] & \hce[\bullet]{\Lambda_{\gamma,\nu}} \ar[d] \\ H_{\bullet}\left(\bigsqcup_{\begin{subarray}{c} n\geq 1 \\ g>1-\frac{n}{2} \end{subarray}} [\kcmp \times \Delta^{\circ}_{n-1}]^{pt}\right) \ar@{=}[r] \ar[d] & \hce[\bullet]{\Lambda_\gamma} \ar[d] \\  H_{\bullet} \left(\bigsqcup_{\begin{subarray}{c} n\geq 1 \\ g>1-\frac{n}{2} \end{subarray}} [\mspc \times \Delta^{\circ}_{n-1}]^{pt}\right) \ar@{=}[r] & \hce[\bullet]{\mathfrak{h_{\geq 2}}} }
\end{equation}

\section{Maurer-Cartan moduli spaces and characteristic classes} \label{sec_mc}

In this section we recall basic material on Maurer-Cartan moduli spaces of differential graded Lie algebras and explain how to construct classes in the homology of any differential graded Lie algebra by exponentiating elements in the associated Maurer-Cartan moduli space. An important technical point is that this differential graded Lie algebra should be pronilpotent.

\subsection{Maurer-Cartan moduli spaces}

\begin{defi}
Let $\mathfrak{g}$ be a pronilpotent differential graded Lie algebra. The Maurer-Cartan set $\mcs{\mathfrak{g}}$ of this differential graded Lie algebra is defined as
\[ \mcs{\mathfrak{g}}:=\left\{x\in\mathfrak{g}_0:dx+\frac{1}{2}[x,x]=0\right\}.\]
The Maurer-Cartan moduli space $\mcm{\mathfrak{g}}$ of the differential graded Lie algebra is defined as the quotient of the above set by the following action of $\mathfrak{g}_1$:
\[\exp(y)\cdot x:=x+\sum_{n=0}^\infty\frac{1}{(n+1)!}[\ad y]^n(dy+[y,x]).\]
This action is well defined as the differential graded Lie algebra is pronilpotent.
\end{defi}

The above action is defined in such a way that
\begin{equation}\label{eqn_mcflow}
\frac{d}{dt}\left[\exp(ty)\cdot x\right]=dy+\ad y[\exp(ty)\cdot x].
\end{equation}

The Maurer-Cartan set is a functorial construction. That is to say that a morphism $\mathfrak{g}\to\mathfrak{g}'$ of differential graded Lie algebras induces a map $\mcs{\mathfrak{g}}\to\mcs{\mathfrak{g}'}$. Furthermore, one can check that it descends to a map on the Maurer-Cartan moduli spaces $\mcm{\mathfrak{g}}$ and $\mcm{\mathfrak{g}'}$.

\subsection{Characteristic classes}

In this section we explain a construction, well-known to experts (cf. \cite{ss}), which produces classes in the homology of any differential graded Lie algebra by exponentiating elements in the Maurer-Cartan moduli space.

\begin{defi}
Let $\mathfrak{g}$ be a differential graded Lie algebra and take an element $x\in\mcs{\mathfrak{g}}$. The characteristic class $\cc{x}\in\ce{\mathfrak{g}}$ is defined by the formula
\[\cc{x}:=\exp(x)=1+x+\frac{1}{2}x\cdot x +\cdots + \frac{1}{n!}x^n+\cdots.\]
Note that this is an inhomogeneous element in the Chevalley-Eilenberg complex.
\end{defi}

\begin{theorem} \
\begin{enumerate}
\item
$\cc{x}$ is a cycle.
\item
Maurer-Cartan equivalent elements produce homologous classes.
\end{enumerate}
Hence, the characteristic class is a map
\[\mathrm{ch}:\mcm{\mathfrak{g}}\to\hce{\mathfrak{g}}.\]
\end{theorem}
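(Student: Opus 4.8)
The plan is to verify the two assertions directly from the definitions, using the explicit formula for the Chevalley--Eilenberg differential $\delta$ together with the Maurer--Cartan equation. For part (1), I would compute $\delta(\exp(x))$ term by term. Applying $\delta$ to $\frac{1}{n!}x^n$ produces two kinds of contributions: bracket terms $\{x,x\}\,x^{n-2}$ coming from the first sum in the definition of $\delta$, and differential terms $d(x)\,x^{n-1}$ coming from the second sum. Since all factors are equal to the single even element $x\in\mathfrak{g}_0$, the sign factors $(-1)^p$ and $(-1)^q$ are trivial, and elementary bookkeeping shows that the coefficient of $x^{n-2}$ in $\delta(\exp(x))$ is a scalar multiple of $dx+\frac{1}{2}[x,x]$, which vanishes by the Maurer--Cartan equation. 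Concretely, one organizes the sum as $\delta(\exp(x))=\sum_{k\ge 0}\frac{1}{k!}\left(dx+\tfrac12\{x,x\}\right)x^{k}=\left(dx+\tfrac12[x,x]\right)\exp(x)=0$, after correctly counting the $\binom{n}{1}$ ways to single out one factor for $d$ and the $\binom{n}{2}$ ways to single out an unordered pair for the bracket, against the $\frac{1}{n!}$ normalization. The only subtlety worth care here is that the construction requires $\mathfrak{g}$ pronilpotent so that $\exp(x)$ genuinely lies in $\widehat{S}(\mathfrak{g})=\ce{\mathfrak{g}}$ and the infinite sum converges in the relevant topology.

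For part (2), suppose $x'=\exp(y)\cdot x$ for some $y\in\mathfrak{g}_1$; I would show $\cc{x}$ and $\cc{x'}$ are homologous by exhibiting an explicit primitive. Set $x_t:=\exp(ty)\cdot x$, which is a path in $\mcs{\mathfrak{g}}$ from $x_0=x$ to $x_1=x'$, and consider the family of cycles $\cc{x_t}=\exp(x_t)$. Differentiating in $t$ and using \eqref{eqn_mcflow}, namely $\dot{x}_t=dy+[y,x_t]$, gives
\[
\frac{d}{dt}\exp(x_t)=\dot{x}_t\cdot\exp(x_t)=(dy+[y,x_t])\exp(x_t).
\]
The claim is that the right-hand side equals $\delta(y\cdot\exp(x_t))$ — here one uses that $y\cdot\exp(x_t)$ is an element of $\ce{\mathfrak{g}}$ and applies the explicit $\delta$-formula: the $d$-part of $\delta$ acting on the $y$-factor produces $d(y)\exp(x_t)=dy\cdot\exp(x_t)$ up to sign, the bracket terms pairing $y$ with each $x_t$ give $[y,x_t]\exp(x_t)$, and the remaining terms pairing $x_t$ with $x_t$ or applying $d$ to $x_t$-factors assemble into $y\cdot\delta(\exp(x_t))=0$ by part (1). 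Hence
\[
\cc{x'}-\cc{x}=\int_0^1\frac{d}{dt}\exp(x_t)\,dt=\delta\!\left(\int_0^1 y\cdot\exp(x_t)\,dt\right),
\]
so the two characteristic classes differ by a boundary. Since every element of $\mcm{\mathfrak{g}}$ is, by definition, a chain of such $\exp(y)$-moves, the class in $\hce{\mathfrak{g}}$ depends only on the Maurer--Cartan equivalence class, giving the well-defined map $\mathrm{ch}:\mcm{\mathfrak{g}}\to\hce{\mathfrak{g}}$.

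I expect the main obstacle to be the sign bookkeeping in the identity $\frac{d}{dt}\exp(x_t)=\delta(y\cdot\exp(x_t))$: one must carefully track the Koszul signs generated when the odd element $y$ is permuted past the even powers of $x_t$ in the symmetric algebra, and confirm that the $d$-term, the $[y,x_t]$-terms, and the leftover $\delta(\exp(x_t))$-terms separate cleanly with the signs predicted by \eqref{eqn_mcflow}. A cleaner and less error-prone route, which I would use if the direct computation becomes unwieldy, is to invoke the standard fact that $\delta$ is a coderivation of the cofree cocommutative coalgebra $\ce{\mathfrak{g}}$, so that it is determined by its corestriction to $\mathfrak{g}$; since $\exp(x)$ is grouplike, both $\delta(\exp(x))$ and the homotopy term can be checked after projecting to $\mathfrak{g}$, where the identities reduce to the Maurer--Cartan equation and \eqref{eqn_mcflow} themselves. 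The convergence/pronilpotence hypothesis must be invoked throughout to justify manipulating the infinite sums and integrals termwise.
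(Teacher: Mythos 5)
Your proof is correct. Part (1) is verbatim the paper's argument: expand $\delta(\exp(x))$ and observe it equals $\bigl(dx+\tfrac12[x,x]\bigr)\cdot\cc{x}$. Part (2) reaches the same conclusion by a mildly different route. The paper also starts from the flow equation \eqref{eqn_mcflow}, but instead of integrating it introduces the operator $\gamma_y(g_1\cdots g_k)=dy\cdot g_1\cdots g_k+\sum_i\pm[y,g_i]\,g_1\cdots\hat g_i\cdots g_k$, shows by uniqueness of solutions of the ODE $\dot c=\gamma_y(c)$ that $\cc{\exp(y)\cdot x}=\exp(\gamma_y)[\cc{x}]$, and concludes because $\gamma_y=\delta s_y+s_y\delta$ with $s_y$ being multiplication by $y$ -- the same homotopy operator that underlies your primitive $\int_0^1 y\cdot\exp(x_t)\,dt$. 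What your version buys is an explicit bounding chain; what the paper's version buys is that it never needs the intermediate points $x_t=\exp(ty)\cdot x$ to satisfy the Maurer--Cartan equation, since the ODE argument only uses \eqref{eqn_mcflow}. Your identity $\tfrac{d}{dt}\exp(x_t)=\delta\bigl(y\cdot\exp(x_t)\bigr)$ does use $\delta(\exp(x_t))=0$, i.e.\ that the gauge action preserves $\mcs{\mathfrak{g}}$; this is a standard fact (and is implicit in the paper's definition of $\mcm{\mathfrak{g}}$ as a quotient of the Maurer--Cartan set), but you should state that you are invoking it, or prove it, e.g.\ by checking that $f(t)=dx_t+\tfrac12[x_t,x_t]$ satisfies a linear ODE with $f(0)=0$. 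With that point made explicit, the termwise integration over $\gf[[t]]$ justified by pronilpotence is unproblematic and the argument is complete.
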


\begin{proof} \
\begin{enumerate}
\item
This follows by direct calculation
\begin{displaymath}
\begin{split}
\delta(\cc{x})&=\sum_{n=0}^\infty\frac{1}{n!}\left(ndx\cdot x^{n-1}+\frac{n(n-1)}{2}[x,x]\cdot x^{n-2}\right).\\
&= \left(dx+\frac{1}{2}[x,x]\right)\cdot\cc{x}.
\end{split}
\end{displaymath}
Hence $\cc{x}$ is a cycle if and only if $x$ satisfies the Maurer-Cartan equation.
\item
Suppose we have two Maurer-Cartan equivalent elements $x,x'\in\mcm{\mathfrak{g}}$ so that there is an element $y\in\mathfrak{g}_1$ such that
\[x'=\exp(y)\cdot x.\]
Consider the map $\gamma_y:\ce{\mathfrak{g}}\to\ce{\mathfrak{g}}$ defined by the formula
\[\gamma_y(g_1\cdots g_k):= dy\cdot g_1\cdots g_k + \sum_{i=1}^k (-1)^p [y,g_i]\cdot g_1\cdots \hat{g}_i\cdots g_k,\]
where $p:= |g_i|(|g_1|+\ldots+|g_{i-1}|)$. This map is nullhomotopic. A contracting homotopy is provided by the map $s_y:\ce{\mathfrak{g}}\to\ce{\mathfrak{g}}$ defined by the formula
\[s_y(g_1\cdots g_k):=y\cdot g_1\cdots g_k.\]
So we have the identity $\gamma_y=\delta\circ s_y + s_y\circ\delta$.

Now notice that it follows from Equation \eqref{eqn_mcflow} that the curves $c_1(t)$ and $c_2(t)$ defined by the formulae
\begin{displaymath}
\begin{split}
c_1(t)&:=\cc{\exp(ty)\cdot x},\\
c_2(t)&:=\exp(t\gamma_y)[\cc{x}];
\end{split}
\end{displaymath}
are both solutions to the initial value problem
\[\frac{dc}{dt}=\gamma_y(c(t)),\quad c(0)=\cc{x}.\]
Hence they must coincide. Taking $t=1$ we conclude
\[\cc{x'}=\exp(\gamma_y)[\cc{x}].\]
Since the map $\gamma_y$ is nullhomotopic, it follows that $x$ and $x'$ give rise to homologous classes.
\end{enumerate}
\end{proof}

The characteristic class map is a natural transformation. This means that if $\mathfrak{g}\to\mathfrak{g}'$ is a morphism of differential graded Lie algebras then we have a commutative diagram
\[\xymatrix{\mcm{\mathfrak{g}} \ar[d] \ar[r]^{\mathrm{ch}}& \hce{\mathfrak{g}} \ar[d] \\ \mcm{\mathfrak{g}'} \ar[r]^{\mathrm{ch}} & \hce{\mathfrak{g}'} }\]

\section{Quantum A-infinity algebras} \label{sec_quantum}

In this section we give the definition of a quantum $\ai$-algebra as the solution to a certain master equation and explain how any quantum $\ai$-algebra gives rise to a cyclic $\ai$-algebra. Furthermore, we explain how one can produce quantum $\ai$-algebras from cyclic $\ai$-algebras by imposing on them a kind of quantum constraint. Lastly, we describe by means of the construction outlined in Section \ref{sec_mc} how quantum $\ai$-algebras produce classes in a compactification of the moduli space of curves. These classes lift the classes in the open moduli space which are defined by the classical (cyclic) part of the quantum $\ai$-structure via Kontsevich's construction \cite{kontfeynman}.

\subsection{$\ai$-structures}

\begin{defi}
Let $W$ be a vector space with a symmetric nondegenerate odd bilinear form $\innprod$, so that $V:=\Pi W$ is a symplectic vector space. A cyclic $\ai$-structure on $W$ is an element $h\in\mcs{\mathfrak{h}_{\geq 2}[V]}$, that is an element satisfying the classical master equation
\[\{h,h\}=0.\]
We shall denote the corresponding cyclic $\ai$-algebra by $A:=(W,\innprod,h)$.
\end{defi}

\begin{rem}
This definition comes from the perspective that a cyclic $\ai$-structure on $W$ is equivalent to a symplectic homological vector field on the formal noncommutative symplectic supermanifold $\widehat{T}(V^*)$; cf. \cite{kontfeynman}, \cite{cinf} and \cite[\S 10]{noncom}.
\end{rem}

There is a convenient definition of the cyclic Hochschild cohomology of a cyclic $\ai$-algebra, cf. \cite{noncom}, that will be required in Section \ref{sec_obsthy} when we come to deal with the obstruction theory for quantum $\ai$-algebras.

\begin{defi} \label{def_cychom}
Let $A:=(W,\innprod,h)$ be a cyclic $\ai$-algebra. The cyclic Hochschild complex of $A$ is the complex
\[CC^\bullet(A):=\left(\mathfrak{h}_{\geq 1}[V],\ad(h)\right).\]
The Maurer-Cartan condition on $h$ ensures that the map $\ad(h)$ squares to zero. The cohomology of this complex is the cyclic Hochschild cohomology of $A$ and is denoted by $HC^\bullet(A)$.
\end{defi}

To define a quantum $\ai$-algebra, we look for solutions to the master equation in $\Lambda_{\gamma,\nu}[V]$.

\begin{defi}
A quantum $\ai$-structure on $W$ is an element $h_{\gamma,\nu}\in\mcs{\Lambda_{\gamma,\nu}[V]}$, that is an element satisfying the quantum master equation
\[d(h_{\gamma,\nu})+\frac{1}{2}\{h_{\gamma,\nu},h_{\gamma,\nu}\}=0.\]
\end{defi}

Now recall \eqref{eqn_dgmap} that there is a map $\Lambda_{\gamma,\nu}[V]\to\mathfrak{h}_{\geq 2}[V]$ of differential graded Lie algebras defined by setting the deformation parameters to zero and projecting onto the relevant summand. This map induces a map on the associated Maurer-Cartan sets, hence any quantum $\ai$-structure on $W$ yields a cyclic $\ai$-structure on $W$ by simply taking its image under this projection.

In fact, it is possible to go backwards. By imposing a certain constraint on a cyclic $\ai$-structure, we can produce a solution to the quantum master equation. Consider the Lie subalgebra $\mathfrak{k}[V]$ of $\mathfrak{h}_{\geq 2}[V]$ consisting of elements $h$ satisfying
\begin{equation} \label{eqn_quantumconstraint}
\Delta(h) = 0.
\end{equation}
That this is a Lie subalgebra follows from the compatibility condition between the cobracket and the bracket in our Lie bialgebra structure. This Lie subalgebra fits into the following diagram of differential graded Lie algebras
\[\xymatrix{ & \Lambda_{\gamma,\nu}[V] \ar@{->>}[d] \\ \mathfrak{k}[V] \ar@{^{(}->}[ru] \ar@{^{(}->}[r] & \mathfrak{h}_{\geq 2}[V] }, \]
where the diagonal inclusion embeds $\mathfrak{k}[V]$ into the summand of $\Lambda_{\gamma,\nu}[V]$ corresponding to $\mathfrak{h}_{\geq 2}[V]$. This diagram gives rise to a corresponding diagram of Maurer-Cartan sets; hence any Maurer-Cartan element in $\mathfrak{k}[V]$, that is, an even element $h\in\mathfrak{h}_{\geq 2}[V]$ satisfying
\[ \Delta(h)=0\quad\text{and}\quad\{h,h\}=0,\]
yields a quantum $\ai$-structure on $W$.

\subsection{Classes in the moduli space of curves}

The algebraic structures of the preceding section can be used to produce classes in the moduli space of curves by making use of Theorem 1.1 of \cite{kontsympgeom} and Theorem 5.6 of \cite{ham}. Recall diagram \eqref{eqn_shrink}, which links the homology of the differential graded Lie algebras constructed in Section \ref{sec_noncomm} to the homology of the corresponding compactifications of the moduli space. Applying the characteristic class map yields the following commutative diagram:

\begin{equation} \label{eqn_liftclass}
\xymatrix{ \mcs{\Lambda_{\gamma,\nu}[V]} \ar[r]^{\mathrm{ch}} \ar[d] & H_\bullet(\Lambda_{\gamma,\nu}) \ar@{=}[r] \ar[d] & H_\bullet\left(\sqcup_{g,n}\lcmp\right) \ar[d] \\ \mcs{\Lambda_{\gamma}[V]} \ar[r]^{\mathrm{ch}} \ar[d] & H_\bullet(\Lambda_{\gamma}) \ar@{=}[r] \ar[d] & H_\bullet\left(\sqcup_{g,n}[\kcmp\times\Delta_{n-1}^{\circ}]^{pt}\right) \ar[d] \\ \mcs{\mathfrak{h}_{\geq 2}[V]} \ar[r]^{\mathrm{ch}} & H_\bullet(\mathfrak{h}_{\geq 2}) \ar@{=}[r] & H_\bullet\left(\sqcup_{g,n}[\curv\times\Delta^{\circ}_{n-1}]^{pt}\right)}
\end{equation}

Hence we see that the problem of lifting a homology class defined on $\curv^{pt}$ to a class on the compactification $\lcmp$ can be interpreted as the problem of lifting a cyclic $\ai$-structure to a quantum $\ai$-structure. Likewise, the problem of lifting a class on $\curv^{pt}$ to a class on $\kcmp$ is analogous to the problem of lifting a solution of the master equation in $\mathfrak{h}_{\geq 2}[V]$ to a solution of the master equation in $\Lambda_{\gamma}[V]$. This process is controlled by a certain deformation theory, which is the subject of the next section.

\section{Obstruction theory} \label{sec_obsthy}

In this section we will describe the deformation theory which controls the process of lifting a solution to the master equation in $\mathfrak{h}_{\geq 2}$ to a solution of the master equation in $\Lambda_{\gamma,\nu}$. This is an iterative procedure in which each step of the process is controlled by a certain cohomology theory, which in this case coincides with the cyclic Hochschild cohomology of the underlying $\ai$-structure. To describe this iterative process, we fix a symplectic vector space $V:=\Pi W$ and provide $\Lambda_{\gamma,\nu}:=\Lambda_{\gamma,\nu}[V]$ with a filtration. We define the deformation parameter $\gamma$ to have order 2, whilst the deformation parameter $\nu$ and an element $h\in\mathfrak{h}_{\geq 2}$ have order 1. Our convention is then that the tensor
\[\gamma^g\nu^n h_1\cdots h_k\]
in $\Lambda_{\gamma,\nu}$ has homogeneous order $2g+n+k-1$. The filtration $F_p$ then comes from this ordering:
\[ F_p[\Lambda_{\gamma,\nu}]=\text{all tensors of order}\geq p. \]

This is a complete filtration of $\Lambda_{\gamma,\nu}$ in which the bracket preserves the filtration degree and the differential increases the filtration degree by 1:
\[ [F_p,F_q]\subset F_{p+q} \qquad d(F_p)\subset F_{p+1}. \]

The set up of the problem is as follows. We start with a fixed element $h\in\mcs{\mathfrak{h}_{\geq 2}[V]}$, which gives $W$ the structure of a cyclic $\ai$-algebra $A:=(W,\innprod,h)$. We want to extend this to an element $h_{\gamma,\nu}\in\mcs{\Lambda_{\gamma,\nu}}$; that is we are interested in describing the fiber $\pi^{-1}{h}$ of quantum $\ai$-structures over a fixed cyclic $\ai$-structure $h$. We know that the subspace $\Lambda^{\mathrm{odd}}_{\gamma,\nu}$ of $\Lambda_{\gamma,\nu}$ generated by odd elements acts on $\mcs{\Lambda_{\gamma,\nu}}$, but this action does not fix $h$, so we instead consider the action of $F_1[\Lambda^{\mathrm{odd}}_{\gamma,\nu}]$, which does. We denote the corresponding moduli space given by quotienting out by this action by $\widetilde{\pi^{-1}{h}}$.

Note that each $F_i$ is a differential graded ideal, hence each quotient
\[ \Lambda_{\gamma,\nu}/F_i[\Lambda_{\gamma,\nu}]\]
has the structure of a differential graded Lie algebra. The obstruction theory is the theory which describes the process of extending the element $h$ up this tower of differential graded Lie algebras.
\[h\in\mathfrak{h}_{\geq 2}=\Lambda_{\gamma,\nu}/F_1\leftarrow\Lambda_{\gamma,\nu}/F_2\leftarrow\ldots\Lambda_{\gamma,\nu}/F_i\leftarrow\ldots.\]

Given any element $x_n\in\mcs{\Lambda_{\gamma,\nu}/F_{n+1}}$ we can write it as
\[ x_n= h_0+\cdots+h_n \]
where $h_i$ is an element of order $i$ and we assume that $h_0=h$. We are interested in lifting it to an element $x_{n+1}\in\mcs{\Lambda_{\gamma,\nu}/F_{n+2}}$. There is an obstruction to this which is defined as follows. Since $x_n\in\mcs{\Lambda_{\gamma,\nu}/F_{n+1}}$ there is an element $o_{n+1}$ of order $n+1$ defined by the equation
\[ d(x_n)+\frac{1}{2}[x_n,x_n] = o_{n+1} \mod F_{n+2}. \]

Since $h\in\mcs{\mathfrak{h}_{\geq 2}}$ and $\mathfrak{h}_{\geq 2}$ is a Lie subalgebra of $\Lambda_{\gamma,\nu}$, the adjoint map $\ad h:=[h,-]$ defines a differential on $\Lambda_{\gamma,\nu}$. We denote the corresponding cohomology theory by $\mathcal{H}^\bullet(A)$. Since $h$ has order zero, the map $\ad h$ preserves the filtration and $\mathcal{H}^\bullet(A)$ is naturally graded by the filtration degree. The following argument is standard in deformation theory; see for example \cite{murray} or \cite{jim}.

\begin{theorem}
The element $x_n$ is extendable to an element $x_{n+1}$ if and only if the obstruction $o_{n+1}$, which is a cocyle, vanishes as a cohomology class.
\end{theorem}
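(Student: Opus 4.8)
The plan is the standard obstruction-theoretic argument, with the filtration $F_\bullet$ doing all of the bookkeeping. Write $R(x):=d(x)+\tfrac12[x,x]$ for the ``curvature'' of an element $x\in\Lambda_{\gamma,\nu}$, so that by hypothesis $R(x_n)\in F_{n+1}$ and $o_{n+1}=R(x_n)\bmod F_{n+2}$. Everything will be deduced from the single identity
\[ R(x_n+h_{n+1})\equiv o_{n+1}+\ad(h)(h_{n+1})\pmod{F_{n+2}},\qquad h_{n+1}\ \text{of order}\ n+1, \]
together with the fact that $o_{n+1}$ is an $\ad(h)$-cocycle.

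First I would check that $o_{n+1}$ is a cocycle. This rests on the generalised Bianchi identity $d(R(x))+[x,R(x)]=0$, which holds in any differential graded Lie algebra for $x$ of the relevant parity and follows from $d^2=0$, the derivation property of $d$, and the graded Jacobi identity (in particular $[x,[x,x]]=0$); the only care needed is to verify that the signs work out in the $\mathbb{Z}/2\mathbb{Z}$-graded, odd-bracket conventions of the paper. Applying this with $x=x_n$ and reducing modulo $F_{n+2}$: the term $d(R(x_n))$ already lies in $F_{n+2}$ because $d$ raises the filtration degree by one, while writing $x_n=h+(\text{terms of order}\geq 1)$ and using $R(x_n)\in F_{n+1}$, every summand of $[x_n,R(x_n)]$ except $[h,o_{n+1}]$ also lands in $F_{n+2}$. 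Hence $\ad(h)(o_{n+1})=0$. Since $h$ is purely of order zero, $\ad(h)$ preserves the order exactly — this is precisely the statement that $\mathcal{H}^\bullet(A)$ is graded by the filtration degree — so $o_{n+1}$ determines a class in the order-$(n+1)$ summand of $\mathcal{H}^\bullet(A)$.

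Next I would establish the displayed identity by a direct expansion. Any lift of $x_n$ to $\mcs{\Lambda_{\gamma,\nu}/F_{n+2}}$ has the form $x_{n+1}=x_n+h_{n+1}$ with $h_{n+1}$ of order $n+1$, and
\[ R(x_n+h_{n+1})=R(x_n)+d(h_{n+1})+[x_n,h_{n+1}]+\tfrac12[h_{n+1},h_{n+1}]. \]
Modulo $F_{n+2}$ one has $R(x_n)\equiv o_{n+1}$; the term $d(h_{n+1})$ lies in $F_{n+2}$; the term $[h_{n+1},h_{n+1}]$ has order $2n+2\geq n+2$; and $[x_n,h_{n+1}]\equiv[h,h_{n+1}]=\ad(h)(h_{n+1})$ since the remaining pieces of $x_n$ have order $\geq 1$. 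The theorem now follows at once. If $x_n$ extends, the left-hand side vanishes for some $h_{n+1}$, so $o_{n+1}=-\ad(h)(h_{n+1})$ is a coboundary and $[o_{n+1}]=0$. Conversely, if $[o_{n+1}]=0$ then $o_{n+1}=\ad(h)(w)$ for some $w$, which may be taken of order $n+1$ by the grading remark above, and then $x_{n+1}:=x_n-w$ solves the master equation modulo $F_{n+2}$. The only genuinely delicate points are pinning down the signs in the Bianchi identity under the paper's parity and odd-bracket conventions, and keeping the order-and-parity bookkeeping in $\mathcal{H}^\bullet(A)$ straight so that $o_{n+1}$ and $\ad(h)(h_{n+1})$ provably occupy the same graded piece — everything else is routine filtration arithmetic.
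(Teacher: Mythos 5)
Your proposal is correct and takes essentially the same route as the paper: your identity $R(x_n+h_{n+1})\equiv o_{n+1}+\ad(h)(h_{n+1})\ (\mathrm{mod}\ F_{n+2})$ is exactly the paper's equation \eqref{eqn_obstruction}, and your appeal to the Bianchi identity $d(R(x))+[x,R(x)]=0$ is just a packaged form of the paper's direct expansion showing $[h,o_{n+1}]\equiv 0 \bmod F_{n+2}$. Your explicit handling of the converse direction, including the observation that $\ad(h)$ preserves the order grading so the cobounding element can be taken homogeneous of order $n+1$, is a small point the paper leaves implicit and you treat it correctly.
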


\begin{proof}
Let us prove that $o_{n+1}$ is a cocycle. This follows from the following calculation.
\begin{displaymath}
\begin{split}
[h,o_{n+1}] &= [h,d(x_n)]+\frac{1}{2}[h,[x_n,x_n]] \mod F_{n+2} \\
&= [dh,x_n]-d([h,x_n]) +\frac{1}{2}([[h,x_n],x_n]-[x_n,[h,x_n]]) \mod F_{n+2} \\
\end{split}
\end{displaymath}
Now write $x_n=h+\bar{x}_n$. Since $d(x_n)+\frac{1}{2}[x_n,x_n] = 0 \mod F_{n+1}$ we get
\begin{displaymath}
\begin{split}
& dh+d\bar{x}_n + \frac{1}{2}[\bar{x}_n,\bar{x}_n] + [\bar{x}_n,h] = 0 \mod F_{n+1} \\
& d([h,\bar{x}_n]) = -\frac{1}{2}d([\bar{x}_n,\bar{x}_n]) \mod F_{n+2} \\
\end{split}
\end{displaymath}
Combining this yields
\begin{displaymath}
\begin{split}
[h,o_{n+1}] &= [dh,h] + [dh,\bar{x}_n] - d([h,\bar{x}_n]) + [[h,\bar{x}_n],\bar{x}_n] \mod F_{n+2} \\
&= [dh,h] + [dh,\bar{x}_n] + \frac{1}{2}d([\bar{x}_n,\bar{x}_n]) - \frac{1}{2}[[\bar{x}_n,\bar{x}_n],\bar{x}_n] - [d(\bar{x}_n),\bar{x}_n] - [dh,\bar{x}_n] \mod F_{n+2} \\
&= 0 \mod F_{n+2} \\
\end{split}
\end{displaymath}
so $o_{n+1}$ is a cocycle.

Suppose that $x_n$ is extendable. This happens if and only if there exists an element $h_{n+1}\in\Lambda_{\gamma,\nu}$ of order $n+1$ such that $x_{n+1}:=x_n+h_{n+1}$ satisfies the quantum master equation
\[ d(x_{n+1})+\frac{1}{2}[x_{n+1},x_{n+1}] = 0 \mod F_{n+2} \]
which is equivalent to
\begin{equation} \label{eqn_obstruction}
o_{n+1} +[h_{n+1},h] = 0.
\end{equation}
Hence $x_n$ is extendable if and only if $o_{n+1}$ is a coboundary.
\end{proof}

One aspect of the cohomology theory, which we have just seen, is that it describes the obstruction to extending a solution of the quantum master equation modulo $F_n$ to the next level. The other aspect of the cohomology theory is that it parameterises the different kind of extensions that we can make.

Suppose that $h_{n+1}$ and $h'_{n+1}$ are two extensions of $x_n$. Since an element is an extension if and only if Equation \eqref{eqn_obstruction} holds, it follows that any two extensions differ by a cocycle. We say that the extensions $h_{n+1}$ and $h'_{n+1}$ are equivalent if there exists an element $\xi_{n+1}\in\Lambda_{\gamma,\nu}$ of order $n+1$ such that
\[\exp(\xi_{n+1})\cdot(x_n+h_{n+1})=x_n+h'_{n+1} \mod F_{n+2}. \]
This happens if and only if $h_{n+1}+[\xi_{n+1},h]=h'_{n+1}$, i.e. these two extensions differ by a coboundary. We denote the set of equivalence classes of extensions of $x_n$ by $\ext{x_n}$. This leads to the following standard theorem.

\begin{theorem} \label{thm_cohomact}
The cohomology group $\mathcal{H}^{n+1}(A)$ acts freely and transitively on equivalence classes of extensions of $x_n$:
\begin{displaymath}
\begin{array}{ccc}
\mathcal{H}^{n+1}(A)\times\ext{x_n} & \to & \ext{x_n} \\
(\alpha_{n+1},h_{n+1}) & \mapsto & \alpha_{n+1} + h_{n+1} \\
\end{array}
\end{displaymath}
\end{theorem}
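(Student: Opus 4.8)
The approach I would take is to recognize that essentially all the content has already been extracted in the discussion preceding the statement, and that what remains is to repackage it as a torsor statement. Recall that an extension of $x_n$ is by definition an order $n+1$ element $h_{n+1}$ satisfying Equation \eqref{eqn_obstruction}, that $\ext{x_n}$ is the set of such elements modulo the relation $h_{n+1}\sim h_{n+1}+[\xi_{n+1},h]$ with $\xi_{n+1}$ of order $n+1$ (i.e. modulo $\ad h$-coboundaries in order $n+1$), and that $\mathcal{H}^{n+1}(A)$ is the quotient of the order $n+1$ $\ad h$-cocycles by these coboundaries. So the goal is to show that addition of a representative cocycle makes $\ext{x_n}$ a torsor over $\mathcal{H}^{n+1}(A)$.

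First I would check that $(\alpha_{n+1},h_{n+1})\mapsto\alpha_{n+1}+h_{n+1}$ actually lands among extensions: since $h_{n+1}$ satisfies $o_{n+1}+[h_{n+1},h]=0$ and $\alpha_{n+1}$ is a cocycle, so $[\alpha_{n+1},h]=0$, the sum satisfies $o_{n+1}+[\alpha_{n+1}+h_{n+1},h]=0$ and is again an extension. Next I would verify that the assignment descends to $\ext{x_n}$ and is independent of the choice of cocycle representative: replacing $\alpha_{n+1}$ by $\alpha_{n+1}+[\eta_{n+1},h]$ and $h_{n+1}$ by an equivalent extension $h_{n+1}+[\xi_{n+1},h]$ alters the output by the coboundary $[\eta_{n+1}+\xi_{n+1},h]$, hence does not change its class in $\ext{x_n}$. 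The group action axioms are then immediate, because the operation is literally addition in $\Lambda_{\gamma,\nu}$: the class of $0$ acts trivially, and $(\alpha_{n+1}+\beta_{n+1})+h_{n+1}=\alpha_{n+1}+(\beta_{n+1}+h_{n+1})$.

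For transitivity, given extensions $h_{n+1}$ and $h'_{n+1}$ of $x_n$, subtracting the two instances of Equation \eqref{eqn_obstruction} shows $\alpha_{n+1}:=h'_{n+1}-h_{n+1}$ is an $\ad h$-cocycle, and its class carries the class of $h_{n+1}$ to that of $h'_{n+1}$. For freeness, if $[\alpha_{n+1}]$ fixes the class of $h_{n+1}$, then $\alpha_{n+1}+h_{n+1}$ is equivalent to $h_{n+1}$, which by definition of the equivalence means $\alpha_{n+1}=[\xi_{n+1},h]\bmod F_{n+2}$ for some order $n+1$ element $\xi_{n+1}$, i.e. $[\alpha_{n+1}]=0$ in $\mathcal{H}^{n+1}(A)$.

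The only genuinely delicate points, and where I would be most careful, are the bookkeeping ones: everything is to be read modulo $F_{n+2}$ throughout; one must respect the sign and parity conventions, noting that $h$ is even so $\ad h=[h,-]$ is an odd-degree differential and that the relevant classes $\alpha_{n+1}$ are the even elements of order $n+1$ killed by $\ad h$; and one should flag that the statement is non-vacuous precisely when the obstruction $o_{n+1}$ of the previous theorem vanishes in cohomology, so that $\ext{x_n}\neq\emptyset$ — in that case the theorem says exactly that $\ext{x_n}$ is an $\mathcal{H}^{n+1}(A)$-torsor. None of these is a real difficulty; the result is the textbook torsor statement of deformation theory transcribed into the present dg Lie setting.
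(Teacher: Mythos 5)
Your proposal is correct and matches the paper's intent: the paper leaves this theorem without proof precisely because the key facts (any two extensions differ by an $\ad h$-cocycle via Equation \eqref{eqn_obstruction}, and equivalence of extensions corresponds to differing by a coboundary $[\xi_{n+1},h]$) are already established in the preceding discussion, and your write-up is exactly that standard torsor argument spelled out. No discrepancy in approach.
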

\noproof

Now we will identify the cohomology theory $\mathcal{H}^{\bullet}(A)$ as something more familiar. The underlying complex is
\[\widehat{S}(\mathfrak{h}_{\geq 1})\cotimes\gf[[\gamma,\nu]]\]
where the differential comes from extending the differential $\ad(h)$ on $\mathfrak{h}_{\geq 1}$ using the Leibniz rule. It therefore follows from Definition \ref{def_cychom} and the Kunneth theorem that the cohomology of this complex is
\[\mathcal{H}^{\bullet}(A)=\widehat{S}(HC^{\bullet}(A))\cotimes\gf[[\gamma,\nu]],\]
where $HC^{\bullet}(A)$ denotes the cyclic Hochschild cohomology of $A$.

\begin{cor} \label{cor_vanish}
Let $A$ be any cyclic $\ai$-algebra whose cyclic Hochschild cohomology vanishes in odd degrees, then $A$ admits an extension to a quantum $\ai$-algebra. Furthermore, if $HC^\bullet(A)$ vanishes entirely, then any such extension is unique up to isomorphism.
\end{cor}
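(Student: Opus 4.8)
The plan is to run the obstruction-theoretic machinery developed in this section, feeding in the identification $\mathcal{H}^{\bullet}(A)=\widehat{S}(HC^{\bullet}(A))\cotimes\gf[[\gamma,\nu]]$ just established. First I would observe that, because $\widehat{S}(-)$ and the formal power series functor are built out of tensor products and coinvariants (which are exact in characteristic zero), the graded vector space $\mathcal{H}^{\bullet}(A)$ is concentrated in even degrees whenever $HC^{\bullet}(A)$ is: a monomial $\gamma^g\nu^n c_1\cdots c_k$ with each $c_i\in HC^{\bullet}(A)$ has degree $|c_1|+\cdots+|c_k|$, and since the $\gamma,\nu$ carry no parity, an even total degree is forced once every $HC^{\bullet}(A)$-factor is even. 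Hence under the hypothesis of the corollary, $\mathcal{H}^{\mathrm{odd}}(A)=0$.

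Next I would invoke the inductive extension scheme. We start with $x_0:=h\in\mcs{\mathfrak{h}_{\geq 2}}=\mcs{\Lambda_{\gamma,\nu}/F_1}$ and attempt to lift successively up the tower $\Lambda_{\gamma,\nu}/F_2\leftarrow\Lambda_{\gamma,\nu}/F_3\leftarrow\cdots$. At stage $n$, Theorem (the obstruction theorem) says the obstruction to lifting $x_n\in\mcs{\Lambda_{\gamma,\nu}/F_{n+1}}$ is a cohomology class $[o_{n+1}]\in\mathcal{H}^{n+1}(A)$. Since $h$ and the bracket are even, $o_{n+1}$ lies in the odd part of $\Lambda_{\gamma,\nu}/F_{n+2}$ — indeed $o_{n+1}=d(x_n)+\tfrac12[x_n,x_n]$ with $x_n$ even, so $o_{n+1}$ is odd — and therefore its class lives in $\mathcal{H}^{\mathrm{odd}}(A)=0$. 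Thus every obstruction vanishes and, by induction, $x_n$ extends for all $n$; since the filtration $F_p$ is complete, the compatible sequence $(x_n)$ assembles to an element $h_{\gamma,\nu}\in\varprojlim_n\mcs{\Lambda_{\gamma,\nu}/F_{n+1}}=\mcs{\Lambda_{\gamma,\nu}}$, which is the desired quantum $\ai$-structure extending $A$.

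For the uniqueness statement, suppose now $HC^{\bullet}(A)=0$ entirely, so $\mathcal{H}^{\bullet}(A)=\gf[[\gamma,\nu]]$ is concentrated in filtration-degree zero; in particular $\mathcal{H}^{n+1}(A)=0$ for all $n\geq0$. Given two extensions $h_{\gamma,\nu}$ and $h'_{\gamma,\nu}$ of $h$, I would construct an isomorphism — i.e. an element of $F_1[\Lambda^{\mathrm{odd}}_{\gamma,\nu}]$ conjugating one to the other — level by level. Assuming inductively that they have been made to agree modulo $F_{n+1}$, Theorem \ref{thm_cohomact} says the set $\ext{x_n}$ of equivalence classes of order-$(n+1)$ extensions is a torsor under $\mathcal{H}^{n+1}(A)=0$, hence a single point: the two extensions are equivalent, so there is $\xi_{n+1}\in\Lambda^{\mathrm{odd}}_{\gamma,\nu}$ of order $n+1$ with $\exp(\xi_{n+1})\cdot h_{\gamma,\nu}\equiv h'_{\gamma,\nu}\bmod F_{n+2}$. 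Composing these gauge transformations and taking the limit (again using completeness of the filtration, so that the infinite product $\cdots\exp(\xi_{n+1})\cdots\exp(\xi_2)$ converges in the pronilpotent group) yields an isomorphism carrying one extension to the other.

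The one point requiring a little care — the main obstacle — is the parity bookkeeping that places every obstruction class in odd degree, and dually the verification that the gauge-transformation parameters $\xi_{n+1}$ may always be taken odd, so that the exponentials genuinely act on $\mcs{\Lambda_{\gamma,\nu}}$ and converge in the pronilpotent group governing the filtration; granting the parity grading of $\mathcal{H}^{\bullet}(A)$ inherited from that of $HC^{\bullet}(A)$, everything else is a formal application of Theorems (the obstruction theorem) and \ref{thm_cohomact} together with completeness of $F_{\bullet}$.
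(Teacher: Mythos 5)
Your proposal is correct and takes essentially the same route as the paper: you identify the obstructions as odd-parity classes in $\mathcal{H}^\bullet(A)\cong\widehat{S}(HC^\bullet(A))\cotimes\gf[[\gamma,\nu]]$, which vanish under the hypothesis, then induct up the filtration tower (using completeness) for existence, and use Theorem \ref{thm_cohomact} with vanishing cohomology, composing the gauge transformations, for uniqueness — exactly the paper's (terser) argument, with the convergence points made explicit. The only blemishes are cosmetic: the bracket is odd rather than even (your subsequent parity count $o_{n+1}=d(x_n)+\tfrac12[x_n,x_n]$ odd is the right one), and the aside that $\gf[[\gamma,\nu]]$ sits in filtration degree zero is unnecessary since pure parameter monomials do not lie in $\Lambda_{\gamma,\nu}$ at all; neither affects the argument.
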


\begin{proof}
Since all the obstructions lie in odd degrees, it follows that if $HC^\bullet(A)$ is concentrated in even degrees, then these obstructions must vanish. This in turn implies, by an inductive argument, that $A$ can be extended to a quantum $\ai$-algebra. Furthermore, if the cohomology vanishes entirely, it follows from Theorem \ref{thm_cohomact} that all the possible extensions at a given level must be equivalent, which implies by a similar inductive argument that this extension must be unique.
\end{proof}

\section{Examples} \label{sec_examples}

To provide an example of the construction outlined in Section \ref{sec_mc} and the obstruction theory described in Section \ref{sec_obsthy}, we turn to the family of one dimensional $\ai$-algebras that were introduced by Kontsevich in \cite{kontfeynman}. These are $\ai$-structures on a fixed vector space $W:=\gf$ of dimension one. Note that $V:=\Pi W$ is not a symplectic vector space according to the definition of Section \ref{sec_intro}, as it comes equipped with an \emph{even} inner product and not an \emph{odd} one. However, this situation can be treated in a manner analogous to the preceding sections, the only difference being that now we must consider the homology of the moduli space of curves \emph{with twisted coefficients}.

A cyclic $\ai$-structure on $W$ is an odd nilpotent hamiltonian $h\in\mathfrak{h}_{\geq 2}[V]$. It turns out that any odd hamiltonian is nilpotent and therefore determines a cyclic $\ai$-structure on $W$. This is due to the fact that any element of even order must vanish in $\mathfrak{h}_{\geq 2}[V]$, which in turn follows from the fact that $\mathfrak{h}[V]$ is the quotient of the free power series algebra in one odd variable $t$ by the action of the cyclic groups:
\[z_n\cdot t^n\mapsto\sgn(z_n)t^n=(-1)^{n-1}t^{n}.\]
Due to this fact, the bracket on $\mathfrak{h}[V]$ is trivial.

One can compute that
\begin{equation} \label{eqn_exmobs}
\Delta(t^{2n+1})=(2n+1)\nu t^{2n-1},
\end{equation}
hence $h$ does not satisfy the additional constraint of \eqref{eqn_quantumconstraint}. If, however, we work with the differential graded Lie algebra $\Lambda_{\gamma}[V]$, i.e. we work modulo the odd variable $\nu$, then \eqref{eqn_exmobs} vanishes and it follows that $h$ extends trivially to a solution of the master equation in $\Lambda_{\gamma}[V]$. Consequently, by \eqref{eqn_liftclass}, it produces a class in $\kcmp$ lifting the class defined in $\curv^{pt}$ by $h\in\mathfrak{h}_{\geq 2}[V]$.

To apply our obstruction theory to the problem, we must compute the cyclic Hochschild cohomology of this $\ai$-algebra. Since the cyclic Hochschild complex
\[(\mathfrak{h}_{\geq 1}[V],\ad h)\]
consists of just all functions $f\in\mathfrak{h}_{\geq 1}[V]$ of odd order, as all the even ones vanish, it follows that the differential is zero and the Hochschild cohomology is just the complex of all functions of odd order.

If we consider the problem of extending $h$ to a solution to the master equation in $\Lambda_{\gamma,\nu}[V]$, the first obstruction is given by a sum of terms of the form \eqref{eqn_exmobs}. Our calculation of the Hochschild cohomology implies that this is a nontrivial cocycle, hence $h$ cannot be extended to a solution to the master equation in $\Lambda_{\gamma,\nu}[V]$ (unless $h=0$).

Now we consider the problem of extending $h$ to a solution to the master equation in $\Lambda_{\gamma}[V]$, i.e. we work modulo $\nu$. It follows from our calculation of the Hochschild cohomology and (the correct corresponding analogue of) Corollary \ref{cor_vanish}, that all the obstructions must vanish, since they must lie in the degrees where the cohomology is zero. Furthermore, it follows that the equivalence classes of all possible extensions to the master equation at a given level are parameterised by products of odd functions. Hence we can obtain a new solution to the quantum master equation by adding on arbitrary products of odd functions; i.e. the general solution to the quantum master equation in $\Lambda_{\gamma}[V]$ is
\[\sum_{i=0}^\infty\sum_{k=1}^\infty\sum_{r_1,\ldots,r_k=0}^\infty a_{r_1,\ldots,r_k}^i\gamma^i (t^{2r_1+1}\wedge\cdots\wedge t^{2r_k+1})\]
from which we can obtain specific solutions by making arbitrary choices for the values of the coefficients $a_{r_1,\ldots,r_k}^i$.

This family of solutions to the quantum master equation extends the family of solutions to the classical master equation which were provided by Kontsevich in \cite{kontfeynman}. It is known that using the construction in Section \ref{sec_mc}, Kontsevich's solutions produce all the kappa classes in the cohomology of the open moduli space \cite{igusa}, \cite{mondello}, \cite{kontfeynman}. We have shown above that these classes can be lifted to the compactification $\kcmp$ of the moduli space defined by Kontsevich in \cite{kontairy}. This is in general agreement with the fact that the kappa classes are supported on the Deligne-Mumford compactification.

\end{document}